\numberwithin{equation}{section}
\newcommand{\di}{{}_{1}}
\newcommand{\dii}{{}_{2}}
\newcommand{\mo}{{}_{(0)}}
\newcommand{\moi}{{}_{(-1)}}
\newtheorem{theorem}[subsection]{Theorem}
\newtheorem{lemma}[subsection]{Lemma}
\newtheorem{corollary}[subsection]{Corollary}
\theoremstyle{definition}
\newtheorem{definition}[subsection]{Definition}
\newcommand{\bk}{\mathbb{F}}
\newcommand{\deltatilde}{\widetilde{\delta}}
\title{Comodule Hom-coalgebras}
\author{Tao Zhang}
\date{}
\begin{document}
\footnotetext{Key words: Comodule Hom-coalgebra, comodule coalgebra.}
\footnotetext{2000 Mathematics Subject Classification: 16W30, 16S30.}

\maketitle
{\bf Abstract.} We introduce the concept of comodule Hom-coalgebras 
and show that comodule Hom-coalgebras can be deformed from comodule coalgebras via endomorphisms.

\section{Introduction and Main Results}
\label{sec:intro}
Hom-type algebras first appeared in the form of Hom-Lie algebras, which satisfy a twisted version of the Jacobi identiy.
Hom-algebras have been introduced for the first time in \cite{ms} to construct Hom-Lie algebras using the commutator
bracket. The universal Hom-associative algebra of a Hom-Lie algebra was studied in \cite{yau1}. Module Hom-algebras and Comodule Hom-algebras
have been studied by D. Yau in \cite{yau31,yau32,yau33}. Some other Hom-type algebras such as $n$-ary Hom-Nambu algebras and $n$-ary Hom-Nambu-Lie algebras have been studied in \cite{amm,yau4}.

In this article, we introduce the concept of comodule Hom-coalgebras, the dual vision of module Hom-algebras, study some of their properties.
We also show that comodule Hom-coalgebras can be deformed from comodule coalgebras via endomorphisms. All of our results are dual to D. Yau's work in \cite{yau31,yau33}. The difference between our's and his is that, since we are dealing with comodules and coalgebras, we will find that the Sweedler notions \cite{sweedler,dnr} are more convenient for us to do the work.

Roughly speaking, an \textbf{$H$-comodule Hom-coalgebra} structure on $C$ consists of the following data:

1. $(H,\mu_H,\Delta_H,\alpha_H)$ is a Hom-bialgebra;

2. $(C,\Delta_C,\alpha_C)$ is a Hom-coassociative coalgebra;

3. $C$ has an $H$-comodule structure $\delta \colon C\to H \otimes C$, such that
\begin{equation}
\label{eq:mha''}
 (\alpha_H^2 \otimes \Delta_C)  \circ\delta= \delta_{CC}  \circ \Delta_C.
\end{equation}
We call \eqref{eq:mha''} the \emph{comodule Hom-coalgebra axiom}.  Here $\delta_{CC} \colon  C\otimes C \to H \otimes C\otimes C$ is the map $\delta_{CC}(c\otimes d)=\sum c\moi d\moi\otimes c\mo\otimes d\mo$ for $\delta(c) = \sum c\moi\otimes c\mo$, $\delta(d) = \sum d\moi\otimes d\mo$.

Using Sweedler's notions, \eqref{eq:mha''} can be written as
\begin{equation}
\label{eq:mha'''}
\sum \alpha_H^2(c\moi)\otimes c\mo\di\otimes c\mo\dii = \sum c\di\moi c\dii\moi\otimes c\di\mo\otimes c\dii\mo
\end{equation}
for $c \in C$.  If $\alpha_H^2 = id_H$ (e.g., if $\alpha_H = id_H)$, then \eqref{eq:mha'''} reduces to the usual comodule coalgebra axiom
\begin{equation}
\label{eq:maaxiom}
\sum c\moi\otimes c\mo\di\otimes c\mo\dii = \sum c\di\moi c\dii\moi\otimes c\di\mo\otimes c\dii\mo.
\end{equation}
In particular, comodule coalgebras are examples of comodule Hom-coalgebras in which $\alpha_H=id_H, \alpha_C= id_C$.

On the other direction, we can construct comodule Hom-coalgebras from comodule coalgebras,
as our main results Theorem \ref{thm:char} and  Theorem \ref{thm:deform} show. The first Theorem gives an alternative characterization of comodule
Hom-coalgebras and the second one shows that we can deform  comodule coalgebras into comodule Hom-coalgebras via endomorphisms.

\begin{theorem}
\label{thm:char}
Let $H = (H,\mu_H,\Delta_H,\alpha_H)$ be a Hom-bialgebra, $C = (C,\Delta_C,\alpha_C)$ be a Hom-coassociative coalgebra, and $\delta \colon C\to H \otimes C$ be an $H$-comodule structure on $C$.  Then the following statements hold.
\begin{enumerate}
\item
The map
\begin{equation}
\label{eq:rhotilde}
\deltatilde =  (\alpha_H^2 \otimes id_C)\circ\delta  \colon H \otimes C \to C
\end{equation}
gives $C$ another $H$-comodule structure.
\item
The map
\begin{equation}
\label{eq:rho2}
\delta_{MN} = (\mu_H \otimes id_C^{\otimes 2}) \circ (id_H \otimes \tau_{H,C} \otimes id_C) \circ \delta^{\otimes 2} \colon C^{\otimes 2} \to H \otimes C^{\otimes 2}
\end{equation}
gives $C^{\otimes 2}$ an $H$-comodule structure.
\item
The map $\delta$ gives $C$ the structure of an $H$-comodule Hom-coalgebra if and only if $\Delta_C \colon C\to C^{\otimes 2} $ is a morphism of $H$-comodules, where we equip $C^{\otimes 2}$ and $C$ with the $H$-comodule structures \eqref{eq:rho2} and \eqref{eq:rhotilde}, respectively.
\end{enumerate}
\end{theorem}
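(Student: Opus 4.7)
The plan is to verify the three statements in order, with Parts (1) and (2) being direct verifications of the Hom-comodule axioms and Part (3) a formal consequence of the first two.

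For Part (1), I would write $\deltatilde(c) = \sum \alpha_H^2(c\moi)\otimes c\mo$ and check the defining identities of a Hom-comodule. The Hom-coassociativity of $\deltatilde$ reduces to the Hom-coassociativity of $\delta$ once the factor $\alpha_H^2$ is pushed through $\Delta_H$ via the comultiplicativity condition $\Delta_H\circ\alpha_H = (\alpha_H\otimes\alpha_H)\circ\Delta_H$ built into the Hom-bialgebra $(H,\mu_H,\Delta_H,\alpha_H)$. The compatibility of $\deltatilde$ with $\alpha_C$ follows in the same way from the analogous compatibility of $\delta$ with $\alpha_C$, since $\alpha_H^2$ commutes with $\alpha_H$.

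For Part (2), I would unfold $\delta_{CC}(c\otimes d) = \sum c\moi d\moi \otimes c\mo\otimes d\mo$ and verify the Hom-comodule axioms for this map. The essential ingredient is that $\Delta_H$ is multiplicative with respect to $\mu_H$ (a defining compatibility of a Hom-bialgebra): this lets the coproduct of $c\moi d\moi$ split into a product of $\Delta_H(c\moi)$ and $\Delta_H(d\moi)$, after which the Hom-coassociativity for $\delta$ applied separately to $c$ and to $d$ makes both sides of the required identity match.

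Part (3) is then an unraveling of definitions. By definition of a comodule map, $\Delta_C \colon (C,\deltatilde)\to (C^{\otimes 2},\delta_{CC})$ is a morphism of $H$-comodules iff $(id_H\otimes \Delta_C)\circ \deltatilde = \delta_{CC}\circ\Delta_C$. Substituting the definition of $\deltatilde$ turns the left-hand side into $(\alpha_H^2\otimes\Delta_C)\circ\delta$, so the equation becomes exactly the comodule Hom-coalgebra axiom \eqref{eq:mha''}.

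The main obstacle is notational rather than conceptual: the extra $\alpha_H^2$ introduced by $\deltatilde$ and the multiplication $\mu_H$ introduced by $\delta_{CC}$ must be commuted past the various coproducts using the Hom-bialgebra axioms in the correct order. With Sweedler notation set up carefully each identity takes only a few lines, and no deeper idea is required beyond the Hom-bialgebra compatibilities and the original comodule axiom for $\delta$.
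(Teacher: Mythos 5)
Your proposal is correct and follows essentially the same route as the paper: Parts (1) and (2) are verified directly in Sweedler notation (the paper does this as Lemmas \ref{lem:rhotilde} and \ref{lem:rho2}, using comultiplicativity of $\alpha_H$ and the compatibility \eqref{eq:Deltamu'} exactly as you describe), and Part (3) is the same unraveling of the comodule-morphism condition into the axiom \eqref{eq:mha''}. No substantive difference.
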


\begin{theorem}
\label{thm:deform}
Let $H = (H,\mu_H,\Delta_H)$ be a bialgebra and $C = (C,\Delta_C)$ be an $H$-comodule coalgebra via $\delta \colon C \to H \otimes C$.  Let $\alpha_H \colon H \to H$ be a bialgebra endomorphism and $\alpha_C \colon C \to C$ be an coalgebra endomorphism such that
\begin{equation}
\label{eq:alpharho}
\delta  \circ \alpha_C= (\alpha_H \otimes \alpha_C) \circ \delta.
\end{equation}
Write $H_\alpha$ for the Hom-bialgebra $(H,\mu_{\alpha,H} = \mu_H \circ \alpha_H,\Delta_{\alpha,H} = \alpha_H \circ\Delta_H ,\alpha_H)$ and $C_\alpha$ for the Hom-coassociative coalgebra $(C,\mu_{\alpha,C} = \alpha_C \circ \Delta_C,\alpha_C)$.  Then the map
\begin{equation}
\label{eq:rhoalpha}
\delta_\alpha = \alpha_C \circ \delta \colon C \to H \otimes C
\end{equation}
gives the Hom-coassociative coalgebra $C_\alpha$ the structure of an $H_\alpha$-comodule Hom-coalgebra.
\end{theorem}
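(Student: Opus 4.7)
The plan is to verify that the deformed triple $(H_\alpha, C_\alpha, \delta_\alpha)$ satisfies each of the three constituent properties of an $H$-comodule Hom-coalgebra spelled out in the introduction. That $H_\alpha$ is a Hom-bialgebra and that $C_\alpha$ is a Hom-coassociative coalgebra are the standard Yau twisting constructions: they follow by direct computations from the morphism properties of $\alpha_H$ and $\alpha_C$ together with the ordinary (co)associativity of $H$ and $C$ (for instance, Hom-coassociativity of $\alpha_C \circ \Delta_C$ reduces to applying $\alpha_C^{\otimes 3}$ to the coassociativity of $\Delta_C$), so I would dispatch these quickly. The substantive content lies in verifying, first, that $\delta_\alpha = (\mathrm{id}_H \otimes \alpha_C)\circ\delta$ (the natural reading of $\alpha_C \circ \delta$ in \eqref{eq:rhoalpha}) is an $H_\alpha$-comodule structure on $C_\alpha$, and second, the comodule Hom-coalgebra axiom \eqref{eq:mha''} for the deformed data.

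For the Hom-comodule structure, write $\delta(c) = \sum c\moi \otimes c\mo$, so that $\delta_\alpha(c) = \sum c\moi \otimes \alpha_C(c\mo)$. The compatibility $\delta_\alpha \circ \alpha_C = (\alpha_H \otimes \alpha_C) \circ \delta_\alpha$ is a one-step application of \eqref{eq:alpharho}. For the twisted coassociative comodule axiom, expanding $(\Delta_{\alpha,H} \otimes \alpha_C) \circ \delta_\alpha(c)$ via $\Delta_{\alpha,H} = \alpha_H \circ \Delta_H$, and expanding $(\alpha_H \otimes \delta_\alpha) \circ \delta_\alpha(c)$ via \eqref{eq:alpharho}, both expressions collapse to $\sum \alpha_H(c\moi) \otimes \alpha_H(c\mo\moi) \otimes \alpha_C^2(c\mo\mo)$ after one appeal to the original coassociativity $(\Delta_H \otimes \mathrm{id}_C) \circ \delta = (\mathrm{id}_H \otimes \delta) \circ \delta$.

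The heart of the proof is axiom \eqref{eq:mha''} for $(H_\alpha, C_\alpha, \delta_\alpha)$. The left side $(\alpha_H^2 \otimes \Delta_{\alpha,C}) \circ \delta_\alpha(c)$ expands, using $\Delta_{\alpha,C} = \alpha_C \circ \Delta_C$ and the fact that $\alpha_C$ is a coalgebra morphism, to
\[ \sum \alpha_H^2(c\moi) \otimes \alpha_C^2(c\mo\di) \otimes \alpha_C^2(c\mo\dii). \]
The right side $(\delta_\alpha)_{CC} \circ \Delta_{\alpha,C}(c)$, with $(\delta_\alpha)_{CC}$ defined as in \eqref{eq:rho2} but using $\mu_{\alpha,H} = \mu_H \circ \alpha_H^{\otimes 2}$ in place of $\mu_H$, reduces via \eqref{eq:alpharho} and the algebra map property of $\alpha_H$ to $\sum \alpha_H^2(c\di\moi c\dii\moi) \otimes \alpha_C^2(c\di\mo) \otimes \alpha_C^2(c\dii\mo)$. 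A single application of the original untwisted comodule coalgebra axiom \eqref{eq:maaxiom} identifies the two sides. The main obstacle is purely bookkeeping: keeping the various powers of $\alpha_H$ and $\alpha_C$ on each tensor factor straight as the deformation is threaded through $\Delta_C$, $\delta$, and $\mu_H$; no algebraic input beyond \eqref{eq:alpharho}, the morphism properties of $\alpha_H$ and $\alpha_C$, and the untwisted comodule coalgebra axiom is needed.
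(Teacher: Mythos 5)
Your proof is correct and follows essentially the same route as the paper's: twist the (co)algebra structures, verify the Hom-comodule axioms, and reduce the comodule Hom-coalgebra axiom to the untwisted axiom \eqref{eq:maaxiom} by means of \eqref{eq:alpharho} together with the morphism properties of $\alpha_H$ and $\alpha_C$. One discrepancy worth flagging: you read the (ill-typed) formula $\delta_\alpha = \alpha_C \circ \delta$ as $(id_H \otimes \alpha_C)\circ\delta$, whereas the paper's own proof takes $\delta_\alpha = \delta \circ \alpha_C$, which by \eqref{eq:alpharho} equals $(\alpha_H \otimes \alpha_C)\circ\delta$ --- these are genuinely different maps, differing by an $\alpha_H$ on the $H$-leg. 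Both choices do satisfy all the required axioms (your computations go through verbatim with one extra power of $\alpha_H$ inserted in the first tensor factor throughout, e.g.\ $\alpha_H^3$ rather than $\alpha_H^2$ on the $H$-component in the final identity), so this is a defensible resolution of a typo in the statement rather than an error, but the map you analyze is not the one the paper's proof analyzes.
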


Consider now a special case of Theorem \ref{thm:deform} when $\alpha_H = id_H$, we have the following corollary.

\begin{corollary}
\label{cor:deform}
Let $H = (H,\mu_H,\Delta_H)$ be a bialgebra, $C = (C,\Delta_C)$ be an $H$-comodule coalgebra via $\delta \colon C \to H \otimes C$, and $\alpha_C \colon C \to C$ be an coalgebra endomorphism that is also $H$-linear.  Then the map $\delta_\alpha$ \eqref{eq:rhoalpha} gives the Hom-coassociative coalgebra $C_\alpha$ the structure of an $H$-comodule Hom-coalgebra, where $H$ denotes the Hom-bialgebra $(H,\mu_H,\Delta_H,id_H)$.
\end{corollary}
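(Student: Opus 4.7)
The plan is to derive Corollary~\ref{cor:deform} directly as the specialization of Theorem~\ref{thm:deform} to the case $\alpha_H = id_H$, so essentially no new work is needed beyond verifying that each hypothesis of the theorem holds under this specialization and that each conclusion reduces to the statement of the corollary.

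First, I would check the hypotheses. The map $id_H \colon H \to H$ is trivially a bialgebra endomorphism, so the role of $\alpha_H$ in Theorem~\ref{thm:deform} is played by the identity. The compatibility condition \eqref{eq:alpharho} becomes
\begin{equation*}
\delta \circ \alpha_C = (id_H \otimes \alpha_C) \circ \delta,
\end{equation*}
which is precisely the assertion that $\alpha_C$ is a morphism of $H$-comodules; this is the hypothesis that $\alpha_C$ is $H$-linear in the corollary. The coalgebra endomorphism hypothesis on $\alpha_C$ is the same in both statements.

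Next I would verify that the objects constructed in Theorem~\ref{thm:deform} reduce correctly. With $\alpha_H = id_H$ we have $\mu_{\alpha,H} = \mu_H \circ id_H = \mu_H$ and $\Delta_{\alpha,H} = id_H \circ \Delta_H = \Delta_H$, so the Hom-bialgebra $H_\alpha$ is exactly $(H,\mu_H,\Delta_H,id_H)$, the Hom-bialgebra $H$ denoted in the corollary. The Hom-coassociative coalgebra $C_\alpha = (C, \alpha_C \circ \Delta_C, \alpha_C)$ and the comodule structure $\delta_\alpha = \alpha_C \circ \delta$ are copied verbatim from Theorem~\ref{thm:deform}. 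Applying that theorem then yields the desired conclusion.

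There is no genuine obstacle here; the only potential subtlety is the terminological convention that "$H$-linear" in this context refers to $H$-colinearity, i.e., being a morphism of $H$-comodules, which is what is required to match condition \eqref{eq:alpharho}.
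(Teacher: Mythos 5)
Your proposal is correct and matches the paper exactly: the paper derives Corollary \ref{cor:deform} precisely as the specialization of Theorem \ref{thm:deform} to $\alpha_H = id_H$, with condition \eqref{eq:alpharho} reducing to the $H$-colinearity of $\alpha_C$ just as you observe. No further comment is needed.
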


The paper is organized as follows. In Section \ref{sec:intro}, we give an introduction of our work and the statement of our main results. In Section \ref{sec:modalg}, we revisit the relevant definitions and prove the first two parts of Theorem \ref{thm:char}.  In Section \ref{sec:deform}, we prove Theorem \ref{thm:deform} and the third part of Theorem \ref{thm:char}.

\section{Preliminaries and Some Lemmas}
\label{sec:modalg}


In this section, we first recall some basic definitions regarding Hom-modules, Hom-associative algebras, Hom-coassociative coalgebras, and Hom-bialgebras.  The first two parts of Theorem \ref{thm:char} will be proved as Lemmas \ref{lem:rhotilde} and \ref{lem:rho2}.  The last part will be proved in the next section.

Throughout the rest of this paper, vector spaces and linear maps are over a field $\bk$ of any characteristic. Given two vector spaces $U$ and $V$, denote by $\tau = \tau_{U,V} \colon U \otimes V \to V \otimes U$ the twist map, i.e., $\tau(u \otimes v) = v \otimes u$.  For a coalgebra $C$ with comultiplication $\Delta \colon C \to C \otimes C$, we use Sweedler's notation for comultiplication: $\Delta(c) = \sum c\di \otimes c\dii $. For a comodule $\delta\colon M\to  C \otimes M$ over a coalgebra $C$, we will write $\delta(m)=\sum m\moi\otimes m\mo$ \cite{dnr}.

A \textbf{Hom-module} is a pair $(V, \alpha)$ \cite{yau1} in which $V$ is a vector space and $\alpha \colon V \to V$ is a linear map.  A morphism $(V, \alpha_V) \to (W, \alpha_W)$ of Hom-modules is a linear map $f \colon V \to W$ such that $\alpha_W \circ f = f \circ \alpha_V$.  We will often abbreviate a Hom-module $(V,\alpha)$ to $V$.  The tensor product of the Hom-modules $(V, \alpha_V)$ and $(W, \alpha_W)$ consists of the vector space $V \otimes W$ and the linear self-map $\alpha_V \otimes \alpha_W$.

A \textbf{Hom-associative algebra} \cite{ms,ms3,yau2} is a triple $(A,\mu,\alpha)$ in which $(A,\alpha)$ is a Hom-module and $\mu \colon A \otimes A \to A$ is a bilinear map such that
\begin{enumerate}
\item
$\alpha \circ \mu = \mu \circ \alpha^{\otimes 2}$ (multiplicativity) and
\item
$\mu \circ (\alpha \otimes \mu) = \mu \circ (\mu \otimes \alpha)$ (Hom-associativity).
\end{enumerate}
If we write $\mu(a\otimes b)=ab$, this means that for any $a, b, c\in A$,
\begin{equation}
\label{eq:mult}
\alpha(ab)=\alpha(a)\alpha(b),
\end{equation}
\begin{equation}
\label{eq:homass}
\alpha(a)(bc)=(ab)\alpha(c).
\end{equation}
A \textbf{morphism} $f \colon (A,\mu_A,\alpha_A) \to (B,\mu_B,\alpha_B)$ of two Hom-associative algebras is a morphism $f \colon (A,\alpha_A) \to (B,\alpha_B)$ of the underlying Hom-modules such that $f(ab)= f(a)f(b)$ for all $a, b\in A$.

A \textbf{Hom-coassociative coalgebra} \cite{ms,ms3} is a triple $(C,\Delta,\alpha)$ in which $(C,\alpha)$ is a Hom-comodule and $\Delta \colon C \otimes C \to C$ is a bilinear map such that
\begin{enumerate}
\item
$\Delta \circ \alpha =\alpha^{\otimes 2} \circ  \Delta$ (comultiplicativity) and
\item
$ (\alpha \otimes \Delta)\circ\Delta  =(\Delta \otimes \alpha)\circ\Delta$ (Hom-coassociativity).
\end{enumerate}
In what follows, we will also write $\Delta(c) = \sum c\di \otimes c\dii$. So in Sweedler's notation, the above condition means that
\begin{equation}
\label{eq:comult}
\sum \alpha(c)\di\otimes \alpha(c)\dii=\sum \alpha(c\di)\otimes \alpha(c\dii),
\end{equation}
\begin{equation}
\label{eq:homcoass}
\sum \alpha(c\di)\otimes c\dii\di\otimes c\dii\dii=\sum c\di\di\otimes c\di\dii\otimes\alpha(c\dii).
\end{equation}

Suppose that $(C,\Delta_C,\alpha_C)$ and $(D,\Delta_D,\alpha_D)$ are two Hom-coassociative coalgebras.  Their tensor product $C \otimes D$ is a Hom-coassociative coalgebra, with $\alpha_{C \otimes D} = \alpha_C \otimes \alpha_D$ and
\[
\Delta_{C \otimes D} = (id_C \otimes \tau_{D,C} \otimes id_D) \circ (\Delta_C \otimes \Delta_D).
\]
A \textbf{morphism} $f \colon (C,\Delta_C,\alpha_C) \to (D,\Delta_D,\alpha_D)$ of Hom-coassociative coalgebras is a morphism
$f \colon (C,\alpha_C) \to (D,\alpha_D)$ of the underlying Hom-modules such that
$ \Delta_D(f(c))=(f\otimes f)\circ\Delta_D(c)$, i.e. $\sum f(c)\di\otimes f(c)\dii=\sum f(c\di)\otimes f(c\dii)$.


A \textbf{Hom-bialgebra} is a quadruple $(H,\mu,\Delta,\alpha)$ in which:
\begin{enumerate}
\item
$(H,\mu,\alpha)$ is a Hom-associative algebra.
\item
$(H,\Delta,\alpha)$ is a Hom-coassociative coalgebra.
\item
$\Delta$ is a morphism of Hom-associative algebras.
\end{enumerate}

Note that $\Delta$ being a morphism of Hom-associative algebras means that
\begin{equation}
\label{eq:Deltamu}
\Delta \circ \mu = \mu^{\otimes 2} \circ (id_H \otimes \tau \otimes id_H) \circ \Delta^{\otimes 2},
\end{equation}
that is,
\begin{equation}
\label{eq:Deltamu'}
\Delta(ab)=\sum (ab)\di\otimes (ab)\dii=\sum a\di b\di\otimes a\dii b\dii.
\end{equation}


Let $(C,\Delta_C,\alpha_C)$ be a Hom-coassociative coalgebra and $(M,\alpha_M)$ be a Hom-module.  An \textbf{$C$-comodule} structure on $M$ consists of a morphism $\delta\colon M\to  C \otimes M$ of Hom-modules, called the \textbf{structure map}, such that
\begin{equation}
\label{eq:moduleaxiom}
(\alpha_C \otimes \delta)\circ\delta = (\Delta_C \otimes \alpha_M)\circ\delta.
\end{equation}
We will write $\delta(m)=\sum m\moi\otimes m\mo \in C \otimes M$ for $m \in M$.  In this notation, \eqref{eq:moduleaxiom} can be rewritten as
\begin{equation}
\label{eq:moduleaxiom'}
\sum \alpha_C (m\moi)\otimes m\mo\moi\otimes m\mo\mo = \sum (m\moi)\di\otimes (m\moi)\dii\otimes\alpha_M(m\mo).
\end{equation}
If $M$ and $N$ are $C$-comodules, then a \textbf{morphism} of $C$-comodules $f \colon M \to N$ is a morphism of the underlying Hom-modules such that
\begin{equation}
\label{eq:modmorphism}
 \delta_N\circ f =  (id_A \otimes f)\circ \delta_M ,
\end{equation}
that is, $$\sum (f(m))\moi\otimes (f(m))\mo = \sum m\moi\otimes f(m\mo).$$


\begin{definition}
Let $(H,\mu_H,\Delta_H,\alpha_H)$ be a Hom-bialgebra and $(C,\Delta_C,\alpha_C)$ be a Hom-coassociative coalgebra.  An \textbf{$H$-comodule Hom-coalgebra} structure on $C$ consists of an $H$-comodule structure $\delta \colon C\to H \otimes C$ on $C$ such that
\begin{equation}
\label{eq:mha}
 (\alpha_H^2 \otimes \Delta_C)  \circ\delta= \delta_{CC}  \circ \Delta_C.
\end{equation}
\end{definition}
We call \eqref{eq:mha} the \emph{comodule Hom-coalgebra axiom}.  Here $\delta_{CC} \colon  C\otimes C \to H \otimes C\otimes C$ is the map $\delta_{CC}(c\otimes d)=\sum c\moi d\moi\otimes c\mo\otimes d\mo$ (see also \eqref{eq:rho2'} in Lemma \ref{lem:rho2} with $M=N=C$).

If we write $\delta(c) = \sum c\moi\otimes c\mo$ for $c \in C$, then \eqref{eq:mha} can be written as
\begin{equation}
\label{eq:mha'}
\sum \alpha_H^2(c\moi)\otimes c\mo\di\otimes c\mo\dii = \sum c\di\moi c\dii\moi\otimes c\di\mo\otimes c\dii\mo.
\end{equation}

In \cite{yau2}, D.Yau proved that we can deform an associative algebra into a Hom-associative algebra.
Dually, our first lemma \ref{lem:homcoalg} says that we can deform a coassociative coalgebra  into a Hom-coassociative coalgebra.

\begin{lemma}
\label{lem:homcoalg}
Let  $(C,\Delta)$ be a coassociative coalgebra and $\alpha \colon C \to C$ be an coalgebra endomorphism of the coalgebra $(C,\Delta)$.  Define the map
\begin{equation}
\label{eq:mualpha}
\Delta_\alpha =  \Delta  \circ\alpha\colon C\to  C^{\otimes 2} .
\end{equation}
Then $C_\alpha = (C,\Delta_\alpha,\alpha)$ is a Hom-coassociative coalgebra.
\end{lemma}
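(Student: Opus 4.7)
The plan is to dualize Yau's argument that associative algebras deform to Hom-associative algebras via endomorphisms. There are exactly two things to check for $C_\alpha = (C,\Delta_\alpha,\alpha)$, namely the comultiplicativity identity $\Delta_\alpha \circ \alpha = \alpha^{\otimes 2} \circ \Delta_\alpha$ and the Hom-coassociativity identity $(\alpha \otimes \Delta_\alpha)\circ \Delta_\alpha = (\Delta_\alpha \otimes \alpha)\circ \Delta_\alpha$. The single essential ingredient will be the fact that, since $\alpha$ is a coalgebra endomorphism of $(C,\Delta)$, it satisfies $\Delta \circ \alpha = \alpha^{\otimes 2} \circ \Delta$, equivalently $\sum \alpha(c)\di\otimes \alpha(c)\dii = \sum \alpha(c\di)\otimes\alpha(c\dii)$ in Sweedler's notation.

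For comultiplicativity, the computation is immediate: starting from $\Delta_\alpha \circ \alpha = \Delta \circ \alpha \circ \alpha$ and rewriting the inner $\Delta\circ\alpha$ as $\alpha^{\otimes 2}\circ\Delta$, one gets $\alpha^{\otimes 2}\circ\Delta\circ\alpha = \alpha^{\otimes 2}\circ\Delta_\alpha$. This is essentially a one-line check.

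For Hom-coassociativity, I would expand both sides in Sweedler notation and reduce everything to ordinary coassociativity of $\Delta$ together with repeated application of $\Delta\circ\alpha = \alpha^{\otimes 2}\circ\Delta$. On the left, $(\alpha\otimes\Delta_\alpha)\circ\Delta_\alpha(c) = (\alpha\otimes\Delta)\circ(id\otimes\alpha)\circ\Delta(\alpha(c))$, and pulling $\alpha$'s past $\Delta$ transforms this into $\sum \alpha^2(c\di)\otimes \alpha^2(c\dii\di)\otimes \alpha^2(c\dii\dii)$. The right-hand side becomes, by the same maneuver, $\sum \alpha^2(c\di\di)\otimes \alpha^2(c\di\dii)\otimes \alpha^2(c\dii)$. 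These agree after applying $(\alpha^2)^{\otimes 3}$ to the coassociativity identity $\sum c\di\otimes c\dii\di\otimes c\dii\dii = \sum c\di\di\otimes c\di\dii\otimes c\dii$.

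There is no serious obstacle here; the only thing to be careful about is keeping Sweedler indices straight and choosing at which point to move $\alpha$ across $\Delta$. In fact, the whole lemma can be phrased coordinate-freely: the identities $(\alpha\otimes\Delta\alpha)\Delta\alpha = (\alpha\otimes\Delta)\alpha^{\otimes 2}\Delta\alpha = \alpha^{\otimes 3}(id\otimes\Delta)\Delta\alpha$ and similarly $(\Delta\alpha\otimes\alpha)\Delta\alpha = \alpha^{\otimes 3}(\Delta\otimes id)\Delta\alpha$ are equal by ordinary coassociativity of $\Delta$ applied to $\alpha(c)$, which makes the proof quite clean.
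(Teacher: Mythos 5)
Your proof is correct and follows essentially the same route as the paper's: both reduce Hom-coassociativity of $\Delta_\alpha$ to ordinary coassociativity of $\Delta$ by commuting $\alpha$ past $\Delta$ (the paper applies coassociativity at the element $\alpha^2(c)$, you apply it at $c$ and then apply $(\alpha^2)^{\otimes 3}$ --- the same computation read in opposite directions), and both treat comultiplicativity as a one-line check. One small slip in your coordinate-free aside: $(\alpha\otimes\Delta_\alpha)=(id_C\otimes\Delta)\circ\alpha^{\otimes 2}$, not $(\alpha\otimes\Delta)\circ\alpha^{\otimes 2}$ (the latter puts an extra $\alpha$ in the first tensor factor), but this does not affect your main Sweedler-notation argument, which is complete as written.
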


\begin{proof}  We prove that $\Delta_\alpha$ is Hom-coassociative.
\[
\begin{split}
(\alpha \otimes \Delta_\alpha)\circ\Delta_\alpha(c)
&= \sum \alpha(\alpha(c)\di)\otimes \alpha(\alpha(c)\dii)\di \otimes \alpha(\alpha(c)\dii)\dii\\
&= \sum \alpha^2(c)\di\otimes \alpha^2(c)\dii\di\otimes\alpha^2(c)\dii\dii\\
&= \sum \alpha^2(c)\di\di\otimes \alpha^2(c)\di\dii\otimes\alpha^2(c)\dii\\
&= \sum \alpha(\alpha(c)\di)\di\otimes \alpha(\alpha(c)\di)\dii \otimes \alpha(\alpha(c)\dii)\\
&= (\Delta_\alpha \otimes \alpha)\circ\Delta_\alpha(c).
\end{split}
\]
For the third equality, we use the fact that $(C,\Delta)$ is a coassociative coalgebra. Comultiplicativity of $\alpha$ with respect to $\Delta_\alpha$ can be checked similarly.
\end{proof}

The following Lemma will be needed in proving the first part of Theorem \ref{thm:char}.

\begin{lemma}
\label{lem:rhotilde}
Let $(C,\Delta_C,\alpha_C)$ be a Hom-coassociative coalgebra and $(M,\alpha_M)$ be a $C$-comodule with structure map $\delta \colon M \to C \otimes M$.  Define the map
\begin{equation}
\label{eq:rhot}
\deltatilde = (\alpha_C^2 \otimes id_M)\circ \delta  \colon M \to C \otimes M.
\end{equation}
i.e. $\deltatilde(m)\triangleq\sum \alpha_C^2(m\moi)\otimes m\mo$. Then $\deltatilde$ is the structure map of another $C$-comodule structure on $M$.
\end{lemma}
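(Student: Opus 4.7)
\medskip

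\textbf{Proof proposal.} The plan is to verify the two things required of a comodule structure map: that $\widetilde{\delta}$ is a morphism of Hom-modules (intertwines $\alpha_M$ with $\alpha_C \otimes \alpha_M$), and that it satisfies the comodule axiom \eqref{eq:moduleaxiom}. Both will be direct consequences of the hypothesis that $(M, \delta, \alpha_M)$ is already a $C$-comodule, together with the comultiplicativity $\Delta_C \circ \alpha_C = \alpha_C^{\otimes 2} \circ \Delta_C$ of the Hom-coassociative coalgebra $C$.

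First I would dispose of the Hom-module compatibility. Since $\delta$ itself commutes with the structure maps, $(\alpha_C \otimes \alpha_M) \circ \delta = \delta \circ \alpha_M$, so
\[
(\alpha_C \otimes \alpha_M) \circ \widetilde{\delta}
= (\alpha_C^3 \otimes \alpha_M) \circ \delta
= (\alpha_C^2 \otimes id_M) \circ (\alpha_C \otimes \alpha_M) \circ \delta
= \widetilde{\delta} \circ \alpha_M.
\]

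Second, for the comodule axiom, I would expand both sides in Sweedler's notation. The left-hand side reads
\[
(\alpha_C \otimes \widetilde{\delta}) \circ \widetilde{\delta}(m)
= \sum \alpha_C^3(m\moi) \otimes \alpha_C^2(m\mo\moi) \otimes m\mo\mo,
\]
while the right-hand side, after applying $\Delta_C \circ \alpha_C^2 = (\alpha_C^2)^{\otimes 2} \circ \Delta_C$ (iterating comultiplicativity twice), becomes
\[
(\Delta_C \otimes \alpha_M) \circ \widetilde{\delta}(m)
= \sum \alpha_C^2(m\moi{}\di) \otimes \alpha_C^2(m\moi{}\dii) \otimes \alpha_M(m\mo).
\]
Now applying $\alpha_C^2 \otimes \alpha_C^2 \otimes id_M$ to the original comodule axiom \eqref{eq:moduleaxiom'} for $\delta$ produces exactly the equation LHS $=$ RHS. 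This finishes the verification.

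The calculations are routine bookkeeping with the twist $\alpha_C^2$; there is no real obstacle beyond making sure the powers of $\alpha_C$ line up correctly when commuting $\Delta_C$ past $\alpha_C^2$ and when invoking the comodule axiom for $\delta$. I would present the proof as a two-line display for the Hom-module check, followed by the aligned two-line computation of the comodule axiom, with a brief remark identifying the steps that use comultiplicativity of $\alpha_C$ and the original $C$-comodule axiom respectively.
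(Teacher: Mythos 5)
Your proposal is correct and follows essentially the same route as the paper: both verify the Hom-module compatibility using $\delta\circ\alpha_M=(\alpha_C\otimes\alpha_M)\circ\delta$, and both reduce the comodule axiom for $\deltatilde$ to the original axiom \eqref{eq:moduleaxiom'} together with the comultiplicativity $\Delta_C\circ\alpha_C^2=(\alpha_C^2)^{\otimes 2}\circ\Delta_C$. The only cosmetic difference is that you apply $\alpha_C^2\otimes\alpha_C^2\otimes id_M$ to both sides of \eqref{eq:moduleaxiom'}, whereas the paper writes the same manipulation as a chain of equalities in Sweedler notation.
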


\begin{proof}
The fact that $\delta$ is a morphism of Hom-comodules means that
\begin{equation}
\label{eq:rhomorphism}
 \delta \circ \alpha_M =(\alpha_C \otimes \alpha_M)\circ \delta,
\end{equation}
i.e.
\begin{equation}
\label{eq:rhomorphism'}
\sum\alpha_M(m)\moi\otimes \alpha_M(m)\mo=\sum\alpha_C(m\moi)\otimes \alpha_M(m\mo).
\end{equation}
First, we show that $\deltatilde$ is a morphism of Hom-comodules:
\[
\begin{split}
\deltatilde \circ \alpha_M(m)
&= \sum\alpha_C^2(\alpha_M(m)\moi)\otimes \alpha_M(m)\mo\\
&= \sum\alpha_C^2(\alpha_C(m\moi))\otimes \alpha_M(m\mo)\quad \text{by \eqref{eq:rhomorphism'}}\\
&= \sum\alpha_C(\alpha_C^2(m\moi))\otimes \alpha_M(m\mo)\\
&= (\alpha_C \otimes \alpha_M) \circ\deltatilde(m).
\end{split}
\]
Second, we show that $\deltatilde$ satisfies \eqref{eq:moduleaxiom}:
\[
\begin{split}
 (\alpha_C \otimes \deltatilde)  \circ\deltatilde(m)
&= (\alpha_C \otimes \deltatilde)(\sum \alpha_C^2(m\moi)\otimes m\mo)\\
&= \sum \alpha_C^3(m\moi)\otimes \alpha_C^2(m\mo\moi)\otimes m\mo\mo\\
&= \sum \alpha_C^2((m\moi)\di)\otimes \alpha_C^2((m\moi)\dii)\otimes \alpha_M(m\mo)\quad \text{by \eqref{eq:moduleaxiom'}}\\
&= \sum (\alpha_C^2(m\moi)\di)\otimes (\alpha_C^2(m\moi)\dii)\otimes \alpha_M(m\mo) \\
&\qquad \qquad\qquad\qquad\qquad\qquad\text{by comultiplicativity of $\alpha_C$}\\
&= (\Delta_C \otimes \alpha_M)\circ\deltatilde(m).
\end{split}
\]
This completes the proof of the Lemma.
\end{proof}

The following Lemma proves the second part of Theorem \ref{thm:char}.  By an $H$-comodule, we mean a comodule over the Hom-coassociative coalgebra $(H,\mu,\alpha)$.

\begin{lemma}
\label{lem:rho2}
Let $(H,\mu_H,\Delta_H,\alpha_H)$ be a Hom-bialgebra, $(M,\alpha_M)$ and $(N,\alpha_N)$ be an $H$-comodule with structure map $\delta_M \colon M\to H \otimes M$ and $\delta_N \colon N\to H \otimes N$ respectively.  Define the map
\begin{equation}
\label{eq:rho2'}
\delta_{MN}= (\mu_H \otimes id_{M\otimes N}) \circ (id_H \otimes \tau_{H,M} \otimes id_M) \circ (\delta_{M}\otimes\delta_{N})
\colon  M\otimes N \to H \otimes M\otimes N.
\end{equation}
\begin{equation}
\label{eq:rho2'}
\delta_{MN}(m\otimes n)\triangleq \sum m\moi n\moi\otimes m\mo\otimes n\mo.
\end{equation}
Then $\delta_{MN}$ is the structure map of an $H$-comodule structure on $M\otimes N$.
\end{lemma}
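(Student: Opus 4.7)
The plan is to verify that $\delta_{MN}$ satisfies the two requirements for an $H$-comodule structure on $M \otimes N$: it is a morphism of Hom-modules, and it satisfies the comodule coassociativity axiom \eqref{eq:moduleaxiom}. Throughout, I will work in Sweedler notation, writing $\delta_M(m)=\sum m\moi \otimes m\mo$, $\delta_N(n)=\sum n\moi \otimes n\mo$, and similarly $\delta_{MN}(m\otimes n)=\sum m\moi n\moi \otimes m\mo \otimes n\mo$.

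For the first requirement, I would expand $\delta_{MN} \circ \alpha_{M \otimes N}(m \otimes n) = \delta_{MN}(\alpha_M(m) \otimes \alpha_N(n))$ and use the fact that each of $\delta_M, \delta_N$ is a morphism of Hom-modules (so $\alpha_M(m)\moi = \alpha_H(m\moi)$, $\alpha_M(m)\mo = \alpha_M(m\mo)$, and similarly for $N$). This converts the expression into $\sum \alpha_H(m\moi)\,\alpha_H(n\moi) \otimes \alpha_M(m\mo) \otimes \alpha_N(n\mo)$, and then invoking multiplicativity $\alpha_H \circ \mu_H = \mu_H \circ \alpha_H^{\otimes 2}$ collapses the first two factors to $\alpha_H(m\moi n\moi)$, which is exactly $(\alpha_H \otimes \alpha_M \otimes \alpha_N) \circ \delta_{MN}(m \otimes n)$.

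For the coassociativity axiom, I would expand both sides in Sweedler notation. The left-hand side $(\alpha_H \otimes \delta_{MN}) \circ \delta_{MN}(m \otimes n)$ becomes
\[
\sum \alpha_H(m\moi n\moi) \otimes m\mo\moi\,n\mo\moi \otimes m\mo\mo \otimes n\mo\mo,
\]
and applying multiplicativity once more rewrites the first factor as $\alpha_H(m\moi)\alpha_H(n\moi)$. Now I would apply the comodule coassociativity \eqref{eq:moduleaxiom'} for $\delta_M$ and $\delta_N$ separately, replacing $\alpha_H(m\moi) \otimes m\mo\moi \otimes m\mo\mo$ by $m\moi\di \otimes m\moi\dii \otimes \alpha_M(m\mo)$ and analogously for $N$. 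The expression becomes
\[
\sum m\moi\di\,n\moi\di \otimes m\moi\dii\,n\moi\dii \otimes \alpha_M(m\mo) \otimes \alpha_N(n\mo),
\]
and the Hom-bialgebra compatibility \eqref{eq:Deltamu'} recognizes the first two factors as $\Delta_H(m\moi n\moi)$, giving exactly the right-hand side $(\Delta_H \otimes \alpha_{M\otimes N}) \circ \delta_{MN}(m \otimes n)$.

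The main obstacle is purely bookkeeping: keeping the upper indices $\di, \dii$ for $\Delta_H$ and the lower indices $\moi, \mo$ for the comodule structures cleanly separated, and identifying the correct moment at which to invoke the algebra-morphism property of $\Delta_H$. Once the right sequence of axiom invocations is arranged—first multiplicativity, then the individual comodule coassociativities, then the Hom-bialgebra axiom—both verifications reduce to routine substitution.
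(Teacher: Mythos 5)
Your proposal is correct and follows essentially the same route as the paper's proof: verify the Hom-module morphism property via the multiplicativity of $\alpha_H$, then verify the coassociativity axiom by applying the comodule axioms for $M$ and $N$ separately and finishing with the algebra-morphism property \eqref{eq:Deltamu'} of $\Delta_H$. (Incidentally, you correctly name multiplicativity \eqref{eq:mult} as the axiom used to collapse $\alpha_H(m\moi)\alpha_H(n\moi)$, where the paper's proof mistakenly cites \eqref{eq:comult}.)
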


\begin{proof}
First, we show that $\delta_{MN}$  is a morphism of Hom-comodules:
\[
\begin{split}
&\delta_{MN}\circ \alpha_{M\otimes N}(m\otimes n)=\delta_{MN}(\alpha_{M}(m)\otimes \alpha_{N}(n))\\
&=\sum (\alpha_{M}(m))\moi(\alpha_{N}(n))\moi\otimes m\mo\otimes n\mo \quad\text{by \eqref{eq:rho2'}}\\
&=\sum (\alpha_{H}(m\moi))(\alpha_{H}(n\moi))\otimes \alpha_{M}(m\mo)\otimes \alpha_{N}(n\mo)
\quad \text{by \eqref{eq:rhomorphism}}\\
&= \sum (\alpha_{H}(m\moi n\moi))\otimes \alpha_{M}(m\mo)\otimes \alpha_{N}(n\mo)
\quad \text{by \eqref{eq:comult}}\\
&=(\alpha_H \otimes \alpha_{M\otimes N})\circ  \delta_{MN}(m\otimes n).
\end{split}
\]
To see that $\delta_{MN}$  satisfies \eqref{eq:moduleaxiom} (with $\delta_{MN}$, $H$, and ${M\otimes N}$ in place of $\delta$, $C$, and $M$, respectively), we compute as follows:
\[
\begin{split}
& (\alpha_H \otimes \delta_{MN}) \circ \delta_{MN}(m\otimes n) \\
&= \sum (\alpha_{H}(m\moi n\moi))\otimes (m\mo\moi)(n\mo\moi)\otimes (m\mo\mo)\otimes (n\mo\mo)\quad\text{by \eqref{eq:rho2'}} \\
&= \sum (\alpha_{H}(m\moi)(\alpha_{H}(n\moi))\otimes (m\mo\moi)(n\mo\moi)\otimes (m\mo\mo)\otimes (n\mo\mo)\quad \text{by \eqref{eq:comult}}\\
&= \sum ((m\moi)\di (n\moi)\di)\otimes ((m\moi)\dii (n\moi)\dii)\otimes \alpha_{M}(m\mo)\otimes \alpha_{N}(n\mo)\quad \text{by \eqref{eq:moduleaxiom}}\\
&= \sum ((m\moi n\moi)\di)\otimes ((m\moi n\moi)\dii)\otimes \alpha_{M}(m\mo)\otimes \alpha_{N}(n\mo) \quad \text{by \eqref{eq:Deltamu'}}\\
&=(\Delta_H \otimes \alpha_{M\otimes N})\circ \delta_{MN}(m\otimes n) .
\end{split}
\]
This completes the proof of the Lemma.
\end{proof}

\section{Proof of the Main Theorems}
\label{sec:deform}

\begin{proof}[Proof of Theorem \ref{thm:char}]
The first two parts of the Theorem were proved in Lemma \ref{lem:rhotilde} and Lemma \ref{lem:rho2}. For the third part, we have equip $C$ and $C\otimes C$ with the $H$-comodule structures $\deltatilde$ \eqref{eq:rhot} and $\delta_{CC}$ \eqref{eq:rho2'}, respectively.  Then $\Delta_C$ is a morphism of $H$-comodules if and only if
\[
\begin{split}
\delta_{CC}\circ \Delta_C(c)
&=(id_H\otimes\Delta_C)\circ\deltatilde(c)\quad \text{by \eqref{eq:modmorphism}}\\
&=\sum c\di\moi c\dii\moi \otimes c\di\mo\otimes c\dii\mo\\
&= \sum \alpha_H^2(c\moi)\otimes c\mo\di \otimes c\mo\dii\quad \text{by \eqref{eq:rhot}}\\
&= (\alpha_H^2 \otimes \Delta_C)\circ\delta.
\end{split}
\]
This is exactly the comodule Hom-coalgebra axiom \eqref{eq:mha}. This completes the proof of Theorem \ref{thm:char}.
\end{proof}


\begin{proof}[Proof of Theorem \ref{thm:deform}]
The Hom-coassociative coalgebra $C_\alpha = (C,\Delta_{\alpha,C} = \Delta_C \circ \alpha_C,\alpha_C)$ was discussed in section \ref{sec:modalg}.
The proof that $\delta_\alpha$ is the structure map of an $H$-comodule structure on $C$ is given in Lemmas \ref{lem:rhotilde} and \ref{lem:rho2}.

 In order to show that $\delta_\alpha =\delta \circ \alpha_C $ gives $C_\alpha$ the structure of an $H_\alpha$-comodule Hom-coalgebra, we only need to check that the comodule Hom-coalgebra axiom \eqref{eq:mha} holds.

In this case, the comodule Hom-coalgebra axiom \eqref{eq:mha} means that
\begin{equation}
\label{eq:mhadeform}
 (\alpha_H^2 \otimes \Delta_{\alpha,C})\circ \delta_\alpha = \delta_\alpha{}_{CC}  \circ\Delta_{\alpha,C}.
\end{equation}
Let us compute the two sides of \eqref{eq:mhadeform} as follows:
\[
\begin{split}
&\delta_\alpha{}_{CC}  \circ\Delta_{\alpha,C}(c)\\
&=\sum \alpha_H(\alpha_C(c)\di \moi  \alpha_C(c)\dii\moi) \otimes\alpha_C(c)\di \mo \otimes  \alpha_C(c)\dii\mo
\quad \text{by\eqref{eq:rho2'}} \\
&=\sum \alpha_H(\alpha_H(\alpha_C(c)\di \moi)\alpha_H(\alpha_C(c)\dii\moi)) \otimes\alpha_C(\alpha_C(c)\di \mo) \otimes \alpha_C( \alpha_C(c)\dii\mo)
\quad \text{by\eqref{eq:rhomorphism'}} \\
&=\sum \alpha^2_H(\alpha_C(c)\di \moi)\alpha^2_H(\alpha_C(c)\dii\moi) \otimes\alpha_C(\alpha_C(c)\di \mo) \otimes \alpha_C( \alpha_C(c)\dii\mo)
\quad \text{by\eqref{eq:mult}} \\
&=\sum \alpha^2_H(\alpha_C(c)\moi)\otimes\alpha_C(\alpha_C(c)\mo\di ) \otimes \alpha_C( \alpha_C(c)\mo\dii)
\quad \text{by\eqref{eq:rho2'}} \\
&=\sum \alpha^2_H(\alpha_C(c)\moi)\otimes\alpha_C(\alpha_C(c\mo)\di ) \otimes \alpha_C( \alpha_C(c\mo)\dii)
\quad \text{by\eqref{eq:comult}} \\
&= (\alpha_H^2 \otimes \Delta_{\alpha,C})\circ \delta_\alpha(c).
\end{split}
\]
This completes the proof of Theorem \ref{thm:deform}.
\end{proof}




\vskip0.1cm

College of Mathematics, Henan Normal University, Xinxiang 453007, China

E-mail address: zhangtao@htu.cn

Department of Mathematics and LMAM, Peking University, Beijing 100871, China

E-mail address: zhangtao@pku.edu.cn

\end{document}